\newtheorem{prop}{Proposition}
\newtheorem{cor}{Corollary}
\newtheorem{exmp}{Example}
\newtheorem{thm}{Theorem}
\newtheorem{rem}{Remark}
\newcommand{\sech}{ \sec\!\rm h}
\begin{document}
	
	\begin{frontmatter}

		\title{Repeated derivatives of $\tanh$, $\sech$, $\dots $ and associated polynomials}

		\author[Enea]{G. Dattoli}
		\ead{giuseppe.dattoli@enea.it}
		
		\author[Enea]{S. Licciardi \corref{cor}}
		\ead{silvia.licciardi@enea.it}
		
		\author[Unict]{R.M Pidatella}
		\ead{rosa@dmi.unict.it}
		
		\author[Enea]{E. Sabia}
		\ead{elio.sabia@enea.it}
		
		\address[Enea]{ENEA - Frascati Research Center, Via Enrico Fermi 45, 00044, Frascati, Rome, Italy}
		\cortext[cor]{Corresponding author}
		\address[Unict]{Dep. of Mathematics and Computer Science, University of Catania, Viale A. Doria 6, 95125, Catania, Italy}

		\begin{abstract}
			Elementary problems like the evaluation of repeated derivatives of ordinary transcendent functions can usefully be treated by the use of special polynomials and of a formalism borrowed from combinatorial analysis. Motivated by previous researches in this field, we review the results obtained by other authors and develop a complementary point of view for the repeated derivatives of $\sec(.)$, $\tan(.)$ and for their hyperbolic counterparts.
		\end{abstract}

		\begin{keyword}
			Special Functions, Combinatorics, Operator Theory, Stirling Numbers, Touchard Polynomials.
		\end{keyword}
		
	\end{frontmatter}
	
	\section{Introduction}

The problem of finding closed forms for the repeated derivatives of trigonometric functions like tangent and secant, even though being an apparently elementary issue, has been solved in relatively recent times in ref. \cite{Adamchik}. Inspired by this work, a significant amount of research has been subsequently developed. In ref. \cite{Neto} the proof of the results of \cite{Adamchik} was reformulated in terms of a procedure exploiting the “Zeons” Algebra \cite{Feinsilver}. In \cite{Cvijovic,Boyadzhiev} authors addressed this study by employing a class of polynomials (the derivative polynomials ($DP$) introduced in refs. \cite{Hoffman,Hoffman2}) to reformulate the derivation and eventually get a set of fairly simple formulae, providing the successive derivatives of $\tan, \cot, \sec, \csc, \dots$ along with those of their hyperbolic counterparts. \\

In this note we develop a point of view not dissimilar from that of ref. \cite{Cvijovic,Boyadzhiev}. We provide straightforward results in terms of a single family of Legendre like polynomials and comment on the two forms of $DP$ introduced in \cite{Feinsilver,Cvijovic,Boyadzhiev}.\\ 

The repeated derivatives of composite functions $F(x)=f(g(x))$ is a well-established topic in calculus. The formulation of a procedure allowing the derivation of a formula comprising all the possible cases was established in the $XIX$ century \cite{Johnson} and opened important avenue of research in combinatorics \cite{Comtet} and umbral calculus \cite{S.Roman,SLicciardi} as well.\\

\noindent The problem is particularly interesting, encompasses different topics in analysis, including special polynomials like those belonging to the Touchard family \cite{DTouchard} and special numbers like the generalized Stirling forms \cite{Knuth,Mansour} of crucial importance in combinatorial analysis. The repeated derivatives of the Gaussian function are those of a composite function in which $f(.)$ is an exponential and $g(.)$ a quadratic function. The relevant expression leads to the Hermite polynomials as auxiliary tool, to get a synthetic expression for any order of the derivative \cite{Germano}. Within the same context, Bell polynomials emerge whenever one is interested to the derivatives of $F(x)=e^{g(x)}$ \cite{Comtet,Riordan,Bell}. The generalization to the case of $f(.)$, provided  by a generic infinitely  differentiable function, and $g(.)$, a quadratic form, has been discussed in refs. \cite{DSS,Bab} where the problem has been solved by the use of generalized nested forms of Hermite polynomials.\\

\noindent The hyperbolic secant is a composite function too in which $f(.)=(.)^{-1}$, $g(.)=\cosh (.)$. In refs. \cite{Neto,Cvijovic,Boyadzhiev,Nasa} interesting speculations have been presented on the study of the relevant repeated derivatives and of the associated auxiliary polynomials. In this paper we address the same problem, within a different context. Before entering the specific elements of the discussion we review the formalism we are going to exploit in this paper.\\

\noindent A pivotal role within the context of repeated derivatives is played by the Stirling number of second kind \cite{Knuth,Mansour}. They will be introduced using the Touchard polynomials which are defined through the Rodriguez type formula \cite{Germano,L.C.Andrews}.

\begin{equation}\label{TouchPol}
T_n(x)=e^{-x}(x\partial_x)^n e^x.
\end{equation}
They can be written in explicit form by the use of the following expression \cite{Knuth}

\begin{equation}\label{PartSt}
(x\partial_x)^n=\sum_{r=0}^n S_2 (n,r)x^r \partial_x^r
\end{equation}
with $S_2 (n,r)$ being Stirling number of second kind\footnote{We introduce the notation $S_2(l,m)$ instead of the usually symbol $\tiny\left\lbrace\! \begin{array}{c}
	l \\ m \end{array}\!\right\rbrace $.} \cite{Comtet,DTouchard}

\begin{equation}\label{key}
S_2 (l,m)=\frac{1}{m!}\sum_{j=0}^m (-1)^{m-j}\; \binom{m}{j}\;j^{\;l}.
\end{equation}
 According to eqs. \eqref{TouchPol}-\eqref{PartSt}, the Touchard polynomials are explicitly given by
 
 \begin{equation}\label{key}
 T_n(x)=\sum_{r=0}^n S_2(n,r)x^r 
 \end{equation}
and the numbers $S_2(n,k)$ are therefore the coefficients of the polynomials.\\

A fairly straightforward application of the previous computational tool is provided by the evaluation of a closed expression for higher order derivative.
 
\begin{thm}\label{thmSt}
	$\forall m\in\mathbb{N}$
	\begin{equation}\label{key}
\partial_x^m f(e^x)=\sum_{r=0}^m S_2 (m,r)\;e^{xr}f^{(r)}(e^x)
	\end{equation}
	where $f^{(r)}(\xi)$ denotes the $r$-order derivative . 
\end{thm}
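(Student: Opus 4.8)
The plan is to reduce the statement to the operator identity \eqref{PartSt} by a single change of variable. Introduce $\xi = e^x$ and regard $f(e^x)$ as the composition of $f$ with the exponential. The crucial observation is that, for any differentiable $F$, the chain rule gives
\begin{equation}\label{key}
\partial_x F(e^x) = e^x F'(e^x) = \left. \big(\xi\,\partial_\xi\, F(\xi)\big)\right|_{\xi=e^x},
\end{equation}
so that the ordinary derivative $\partial_x$, acting on functions of $x$ through the variable $e^x$, is represented by the dilation operator $\xi\,\partial_\xi$ acting on functions of $\xi$.

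First I would make this correspondence precise and then iterate it. Since $\xi\,\partial_\xi$ maps a function of $\xi$ again to a function of $\xi$, the representation composes cleanly: applying $\partial_x$ a second time reproduces a second factor of $\xi\,\partial_\xi$, and by an immediate induction on $m$ one obtains
\begin{equation}\label{key}
\partial_x^m f(e^x) = \left.\big((\xi\,\partial_\xi)^m f(\xi)\big)\right|_{\xi=e^x}.
\end{equation}
This is the step I expect to be the only genuine point requiring care, because one must check that no cross terms are lost when the derivative $\partial_x$ hits the explicit powers of $e^x$ generated at earlier stages; the bookkeeping is handled automatically precisely because the whole expression is always rewritten back as $(\xi\,\partial_\xi)$ applied to a function of $\xi$ alone.

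Once the iterated operator identity is established, the conclusion is immediate. I would invoke \eqref{PartSt} with the variable $\xi$ in place of $x$, namely $(\xi\,\partial_\xi)^m = \sum_{r=0}^m S_2(m,r)\,\xi^r\,\partial_\xi^r$, and apply both sides to $f(\xi)$, using $\partial_\xi^r f(\xi) = f^{(r)}(\xi)$. Finally, restoring $\xi = e^x$ turns each $\xi^r$ into $e^{xr}$ and each $f^{(r)}(\xi)$ into $f^{(r)}(e^x)$, yielding
\begin{equation}\label{key}
\partial_x^m f(e^x) = \sum_{r=0}^m S_2(m,r)\,e^{xr}\,f^{(r)}(e^x),
\end{equation}
which is the asserted formula. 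The essence of the argument is therefore that the combinatorial content is entirely contained in \eqref{PartSt}; the role of the exponential substitution is simply to convert the analytically awkward repeated differentiation into the algebraically transparent powers of $\xi\,\partial_\xi$.
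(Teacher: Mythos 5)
Your proposal is correct and follows essentially the same route as the paper: both reduce $\partial_x^m f(e^x)$ to $(\xi\,\partial_\xi)^m f(\xi)$ via the substitution $\xi=e^x$ and then invoke the Stirling-number expansion \eqref{PartSt}. The only difference is that you spell out the chain-rule/induction step establishing the operator correspondence, which the paper states without proof.
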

\begin{proof}
	Let
	\begin{equation}\label{key}
	I_m(x)=\partial_x^m f(e^x),
	\end{equation}
by setting  $e^x=\xi$, we find 

\begin{equation}\label{impproof}
\partial_x^m f(e^x)=(\xi \partial_\xi)^m f(\xi)=\sum_{r=0}^m S_2 (m,r)\;\xi^r f^{(r)}(\xi)
\end{equation}
and in conclusion we obtain
\begin{equation*}\label{key}
\partial_x^m f(e^x)=\sum_{r=0}^m S_2 (m,r)\;e^{xr}f^{(r)}(e^x)
\end{equation*}	
in which it is understood that $f^{(r)}(e^x)=\partial_\xi^r f(\xi)\mid_{\xi=e^x}$.
\end{proof}

We define a family of auxiliary polynomials $P_n(x,y)$ and show how they can be very useful for the computation of the repeated derivatives of\footnote{We use the notation $\tan^{-1}(x)$ for $\arctan(x)$.} $\tan^{-1}(x)$.

\begin{thm}
Let 

\begin{equation}\label{LLP}
P_n (x,y)=n!\sum_{r=0}^{\lfloor\frac{n}{2}\rfloor}\frac{x^{n-2r}y^r (n-r)!}{(n-2r)!r!}, \quad \forall x,y\in\mathbb{R}, \forall n\in\mathbb{N},
\end{equation}
a family of two variable polynomials loosely ascribed to the Legendre \cite{DGM,DGM2} family,	then

	\begin{equation}\label{Dxntm}
	\partial_x^n\left(\tan^{-1}(x) \right) =\frac{1}{1+x^2}\;P_{n-1}\left(-\frac{2x}{1+x^2}, -\frac{1}{1+x^2} \right) .
	\end{equation}
\end{thm}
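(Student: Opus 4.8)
The plan is to reduce the statement to a single generating-function identity for the polynomials $P_n$. First I would compute the exponential generating function $\sum_{n\geq 0}\frac{t^n}{n!}P_n(x,y)$. Substituting the definition \eqref{LLP} and reindexing with $m=n-2r$ (so that $m$ and $r$ range independently over $\mathbb{N}$ and $n-r=m+r$) turns the double sum into $\sum_{m,r\geq 0}\binom{m+r}{r}(xt)^m(yt^2)^r$, since $\frac{(n-r)!}{(n-2r)!\,r!}=\binom{m+r}{r}$. Summing by the elementary identity $\sum_{m,r\geq 0}\binom{m+r}{r}a^m b^r=(1-a-b)^{-1}$ (group the terms by the value of $m+r$ and use the binomial theorem) yields
$$\sum_{n\geq 0}\frac{t^n}{n!}P_n(x,y)=\frac{1}{1-xt-yt^2}.$$
I expect this to be the crux of the argument; everything downstream is bookkeeping.

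Next I would feed the specific arguments $X=-\frac{2x}{1+x^2}$ and $Y=-\frac{1}{1+x^2}$ into this generating function and clear denominators. A short computation gives
$$1-Xt-Yt^2=\frac{1+x^2+2xt+t^2}{1+x^2}=\frac{1+(x+t)^2}{1+x^2},$$
so that
$$\frac{1}{1+x^2}\sum_{n\geq 0}\frac{t^n}{n!}P_n(X,Y)=\frac{1}{1+(x+t)^2}.$$

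Finally I would recognize the right-hand side as a Taylor series in $t$. Writing $h(x)=\frac{1}{1+x^2}=\partial_x\tan^{-1}(x)$, the function on the right is simply $h(x+t)$, whose expansion about $t=0$ is $h(x+t)=\sum_{n\geq 0}\frac{t^n}{n!}\partial_x^n h(x)=\sum_{n\geq 0}\frac{t^n}{n!}\partial_x^{n+1}\tan^{-1}(x)$. Comparing the coefficients of $t^n/n!$ in the two displays gives $\partial_x^{n+1}\tan^{-1}(x)=\frac{1}{1+x^2}P_n(X,Y)$, which is precisely \eqref{Dxntm} after relabelling $n\mapsto n-1$. All manipulations can be carried out at the level of formal power series in $t$, so no convergence issue arises. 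The only genuinely nontrivial point is the closed-form evaluation of the generating function; the substitution and the coefficient comparison are routine.
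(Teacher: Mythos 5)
Your proof is correct, and it takes a genuinely different route from the paper's. The paper never touches the generating function at this stage: it computes $K_n(x)=\partial_x^n\left(\frac{1}{1+x^2}\right)$ by writing $\frac{1}{1+x^2}=\int_0^\infty e^{-s(1+x^2)}\,ds$, applying the Hermite identity $\partial_x^n e^{-ax^2}=H_n(-2ax,-a)e^{-ax^2}$ under the integral sign, and then evaluating the resulting $\int_0^\infty e^{-\sigma}\sigma^{n-r}d\sigma$ term by term to reconstitute $P_n$; the final step $\partial_x^n\tan^{-1}(x)=K_{n-1}(x)$ is the same in both arguments. Your route instead establishes the closed form $\sum_{n\ge 0}\frac{t^n}{n!}P_n(x,y)=\frac{1}{1-xt-yt^2}$ (which the paper only records later, in one-variable form, in its final section) and then reads off the result from the Taylor-shift identity $\frac{1}{1+(x+t)^2}=\sum_n\frac{t^n}{n!}\partial_x^n h(x)$. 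Your version is more self-contained — it needs neither the Hermite-polynomial formula nor the interchange of $\partial_x^n$ with the Laplace integral — and it makes transparent \emph{why} these particular arguments $\left(-\frac{2x}{1+x^2},-\frac{1}{1+x^2}\right)$ appear, via the completion of the square $1+x^2+2xt+t^2=1+(x+t)^2$. What the paper's Laplace-transform route buys is the structural identification of $P_n$ as an integral transform of the Hermite family, which is exactly what powers the immediate generalization to $P_n^\nu$ and $\partial_x^n(1+x^2)^{-\nu}$ in the subsequent corollary (your method would need the negative-binomial series $(1-a-b)^{-\nu}$ to match that). One small caveat: your closing remark that everything is purely formal is slightly glib — the identification of the coefficient of $t^n/n!$ in $\frac{1}{1+(x+t)^2}$ with $\partial_x^n\frac{1}{1+x^2}$ uses analyticity of $h$ at the real point $x$ (or, equivalently, the chain-rule fact $\partial_t^n h(x+t)\vert_{t=0}=\partial_x^n h(x)$ plus convergence for $|t|$ small) — but this is immediate for a rational function with no real poles, so the gap is cosmetic.
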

\begin{proof}	
We start from the computation of the quantity
\begin{equation}\label{key}
K_n (x)=\partial_x^n \left(\frac{1}{1+x^2} \right), \quad \forall x\in\mathbb{R}, \forall n\in\mathbb{N}
\end{equation}
which upon the use of the Laplace transform method can be written as
\begin{equation}\label{KnPn}
K_n (x)=\partial_x^n \int_0^\infty e^{-s(1+x^2)}ds= \int_0^\infty e^{-s}\partial_x^n e^{-sx^2}ds.
\end{equation}
The $n^{th}$ order derivative inside the integral sign can be explicetely worked in terms of two variable Hermite polynomials $H_n(x,y)$ \cite{Germano}, namely the auxiliary polynomials for the repeated derivatives of Gaussian functions according to the identity

\begin{equation}\label{key}
\partial_x^n \;e^{-ax^2}=H_n(-2ax,-a)\;e^{-ax^2}.
\end{equation}
Accordingly we find
\begin{equation}
\begin{split}
K_n (x)&=\int_0^\infty e^{-s} H_n(-2xs,-s)e^{-sx^2}ds =\int_0^\infty e^{-\sigma} H_n\left(-2\dfrac{x \sigma}{1+x^2}, -\dfrac{\sigma}{1+x^2} \right)\dfrac{1}{1+x^2}d\sigma=\\ 
& =\dfrac{n!}{1+x^2}\sum_{r=0}^{\lfloor\frac{n}{2}\rfloor}\dfrac{\left( \frac{-2x}{1+x^2}\right)^{n-2r} \left(\frac{-1}{1+x^2} \right)  }{(n-2r)!r!}\int_0^\infty e^{-\sigma}\sigma^{n-r}d\sigma =\frac{1}{1+x^2}\;P_n\left(-\frac{2x}{1+x^2}, -\frac{1}{1+x^2} \right) .
\end{split}
\end{equation}
%
The proof of the identity \eqref{Dxntm} is readily achieved by noting that
\begin{equation}\label{key}
\partial_x\left(\tan^{-1}(x) \right) =\left(\frac{1}{1+x^2}  \right),
\end{equation}
which eventually provides

\begin{equation*}\label{key}
\partial_x^n\left(\tan^{-1}(x) \right)  =K_{n-1}(x)=\frac{1}{1+x^2}\;P_{n-1}\left(-\frac{2x}{1+x^2}, -\frac{1}{1+x^2} \right) .
\end{equation*}
\end{proof}

\begin{cor}\label{CorPnnu}
An extension of the procedure we have just envisaged allows the further results

\begin{equation}\label{Pnnu}
\begin{split}
& K_n^\nu(x)=\partial_x^{n}\left(\frac{1}{\left( 1+x^2\right)^\nu}  \right) =\frac{1}{\left( 1+x^2\right)^\nu}\;P_n^\nu\left(-\frac{2x}{1+x^2}, -\frac{1}{1+x^2} \right),\\
& P_n^\nu(x,y)=\frac{n!}{\Gamma(\nu)}\sum_{r=0}^{\lfloor\frac{n}{2}\rfloor}\frac{x^{n-2r}y^r \;\Gamma(\nu+n-r)}{(n-2r)!r!}, \qquad \forall \nu\in\mathbb{R}.
\end{split}
\end{equation}
\end{cor}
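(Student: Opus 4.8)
The plan is to mimic the Laplace-transform argument that establishes \eqref{Dxntm}, replacing the elementary representation of $(1+x^2)^{-1}$ by the Euler integral for its $\nu$-th power. First I would write, for $\nu>0$,
\begin{equation*}
\frac{1}{(1+x^2)^\nu}=\frac{1}{\Gamma(\nu)}\int_0^\infty s^{\nu-1}e^{-s(1+x^2)}\,ds,
\end{equation*}
which for $\nu=1$ collapses to the representation already exploited in \eqref{KnPn}. Differentiating $n$ times under the integral sign and commuting $\partial_x^n$ with the $s$-integration isolates the Gaussian factor, so that the Hermite identity $\partial_x^n e^{-sx^2}=H_n(-2xs,-s)e^{-sx^2}$ can be invoked exactly as before.

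Next I would recombine $e^{-s}e^{-sx^2}=e^{-s(1+x^2)}$ and perform the rescaling $\sigma=s(1+x^2)$, under which $s^{\nu-1}\,ds=(1+x^2)^{-\nu}\sigma^{\nu-1}\,d\sigma$ while the Hermite arguments become $-2x\sigma/(1+x^2)$ and $-\sigma/(1+x^2)$. Expanding $H_n$ through its explicit series, which coincides in form with \eqref{LLP}, shows that the $r$-th term carries a factor $\sigma^{(n-2r)+r}=\sigma^{n-r}$; hence the $\sigma$-integration produces the moment $\int_0^\infty \sigma^{\nu+n-r-1}e^{-\sigma}\,d\sigma=\Gamma(\nu+n-r)$. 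Collecting the accumulated powers of $(1+x^2)$ yields the overall prefactor $(1+x^2)^{-\nu}$ and reproduces exactly the coefficients defining $P_n^\nu$ in \eqref{Pnnu}, completing the identification.

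I expect the only genuine subtlety to be analytic rather than combinatorial: the Euler representation and the normalising $\Gamma(\nu)$ require $\nu>0$ (more generally $\mathrm{Re}\,\nu>0$) for convergence at the lower endpoint $s=0$, so the assertion ``$\forall\nu\in\mathbb{R}$'' should be read with this restriction in force, the remaining values being accessible by analytic continuation in $\nu$. One must also justify differentiation under the integral sign, which is immediate because on any compact $x$-set the integrand and all its $x$-derivatives are dominated by integrable functions of $s$. Once these points are secured the algebra is a verbatim repetition of the $\nu=1$ computation, the single change being that the factorial moment $\int_0^\infty e^{-\sigma}\sigma^{n-r}\,d\sigma=(n-r)!$ is promoted to the general Gamma moment $\Gamma(\nu+n-r)$.
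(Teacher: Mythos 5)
Your proposal is correct and is exactly the ``extension of the procedure'' the paper intends: the corollary is stated without a written proof, and the obvious argument is precisely your replacement of the Laplace representation of $(1+x^2)^{-1}$ by the Euler integral $\Gamma(\nu)^{-1}\int_0^\infty s^{\nu-1}e^{-s(1+x^2)}\,ds$, followed by the same Hermite expansion and the rescaling $\sigma=s(1+x^2)$, now producing the moment $\Gamma(\nu+n-r)$ in place of $(n-r)!$. Your remark that the integral representation requires $\mathrm{Re}\,\nu>0$, with the stated ``$\forall\nu\in\mathbb{R}$'' recovered by continuation (note $\Gamma(\nu+n-r)/\Gamma(\nu)=(\nu)_{n-r}$ is polynomial in $\nu$), is a legitimate point the paper glosses over.
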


\begin{exmp}
Corollary \ref{CorPnnu} can be exploited to write the repeated derivatives of $\cos^{-1}(x)$ in the form

\begin{equation}\label{key}
\partial_x^n \cos^{-1}(x) =-\partial_x^{n-1}\left( \frac{1}{\sqrt{1-x^2}}\right) =-\frac{1}{\sqrt{1-x^2}}P_{n-1}^{\frac{1}{2}}\left(\frac{2x}{1-x^2}, \frac{1}{1-x^2} \right).
\end{equation}
\end{exmp}

The previous examples have shown the interplay between special polynomials of the $P_n(x,y)$ and the derivatives of the inverse of trigonometric functions. \\

In the following we will see that the same polynomials are of central importance for the evaluation of the repetead derivatives of $\tan$ and $\sec$ functions. They play the role of auxiliary polynomials for the lorentzian type functions and inverse trigonometric functions as well. The relevant elements of contact with the $DP$ will be discussed in the final section of the paper.

\section{Higher Order Derivatives of Trigonometric Functions}

The starting point of the discussion of this section is the derivation of a closed form for the quantity $\partial_x^m\left(\sech(x) \right)$, $\forall m\in\mathbb{N}$.

\begin{prop}
	$\forall m\in\mathbb{N}, \forall x\in [0,2\pi]$,
	
\begin{equation}\label{key}
 \partial_x^m\left(\sech(x) \right) =\sech(x)\sum_{k=0}^m S_2(m,k)\;e^{(k-1)x}\left(e^x\;P_k\left( -\sech(x),-\dfrac{1}{2e^x} \sech(x)\right)  +k\;  P_{k-1}\left( -\sech(x),-\dfrac{1}{2e^x} \sech(x)\right)  \right).
\end{equation}
\end{prop}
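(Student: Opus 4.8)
The plan is to reduce the problem to Theorem~\ref{thmSt} by recognizing $\sech$ as a function of $e^x$. First I would write $\sech(x)=\frac{2e^x}{1+e^{2x}}$, so that, setting $\xi=e^x$, we have $\sech(x)=f(\xi)$ with $f(\xi)=\frac{2\xi}{1+\xi^2}$. This is the crucial observation: the very rational function $\frac{1}{1+\xi^2}$ whose repeated derivatives are governed by the polynomials $P_n$ (through the $K_n$ identity established in the preceding theorem) reappears here, merely multiplied by the linear factor $2\xi$. This is what makes the $P_n$ family the natural auxiliary tool for $\sech$ as well.

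Applying Theorem~\ref{thmSt} to $f$ immediately gives $\partial_x^m\sech(x)=\sum_{k=0}^m S_2(m,k)\,e^{xk}\,f^{(k)}(e^x)$, so the entire task collapses to evaluating $f^{(k)}(\xi)$. Since $f(\xi)=2\xi\cdot\frac{1}{1+\xi^2}$ is a product in which the first factor is of degree one, I would apply the Leibniz rule and discard every term in which two or more derivatives fall on $2\xi$; only the $j=0$ and $j=1$ contributions survive, leaving
\[
f^{(k)}(\xi)=2\xi\,\partial_\xi^{k}\!\left(\frac{1}{1+\xi^2}\right)+2k\,\partial_\xi^{k-1}\!\left(\frac{1}{1+\xi^2}\right).
\]

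Next I would substitute the identity $\partial_\xi^{n}\!\left(\frac{1}{1+\xi^2}\right)=\frac{1}{1+\xi^2}\,P_n\!\left(-\frac{2\xi}{1+\xi^2},-\frac{1}{1+\xi^2}\right)$ for $n=k$ and $n=k-1$, and factor out the common $\frac{1}{1+\xi^2}$. The remaining—and only delicate—step is the bookkeeping of the substitution $\xi=e^x$: one verifies the three algebraic identities $\frac{2\xi}{1+\xi^2}=\sech(x)$, $\frac{1}{1+\xi^2}=\frac{1}{2e^x}\sech(x)$, and $\frac{2}{1+\xi^2}=e^{-x}\sech(x)$, which simultaneously convert the arguments of $P_k$ and $P_{k-1}$ into $-\sech(x)$ and $-\frac{1}{2e^x}\sech(x)$ and produce the correct prefactor. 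Combining them yields $f^{(k)}(e^x)=e^{-x}\sech(x)\big(e^x P_k(\cdots)+k\,P_{k-1}(\cdots)\big)$.

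Finally, inserting this expression into the Theorem~\ref{thmSt} summation merges the exponentials $e^{xk}\,e^{-x}=e^{(k-1)x}$ and pulls the common factor $\sech(x)$ outside, delivering the stated formula verbatim. I expect no genuine analytic difficulty: the main point requiring care is precisely the verification of the three argument identities under $\xi=e^x$, since a slip there would misplace the factors of $e^x$ or the coefficient $\tfrac12$ in the second slot of the $P$-polynomials.
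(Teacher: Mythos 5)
Your proposal is correct and follows essentially the same route as the paper: rewrite $\sech(x)=\frac{2\xi}{1+\xi^2}$ with $\xi=e^x$, apply Theorem~1 in the form $(\xi\partial_\xi)^m$, use Leibniz on the product $2\xi\cdot\frac{1}{1+\xi^2}$ (only two terms surviving), and insert the $K_n$ identity before substituting back. The argument identities you flag as the delicate step check out exactly as you state them.
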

\begin{proof}
	Let
\begin{equation}\label{key}
 \sech(x) =\dfrac{2}{e^x + e^{-x}} 
\end{equation}
then, after setting $e^x=\xi$, we obtain $\forall m\in\mathbb{N}$

\begin{equation}\label{key}
\partial_x^m\left(\sech(x) \right) =\partial_{\log \xi}^m \left(\dfrac{2\xi}{1+\xi^2} \right)= 
2\left(\xi \partial_\xi \right)^m \left( \dfrac{\xi}{1+\xi^2 }\right).  
\end{equation}
The straightforward applications of the proof of the Theorem \ref {thmSt} yields\footnote{We omit the argument of the $P_n$ polynomials for simplicity.} 

\begin{equation}\label{key}
2\left(\xi \partial_\xi \right)^m \left( \dfrac{\xi}{1+\xi^2 }\right)=\dfrac{2}{1+\xi^2}\sum_{k=0}^m S_2(m,k)\;\xi^k \left(\xi P_k+kP_{k-1} \right) 
\end{equation}
which, eventually, leads to

\begin{equation*}\label{key}
\partial_x^m\left(\sech(x) \right) =\sech(x)\sum_{k=0}^m S_2(m,k)\;e^{(k-1)x}\left(e^x\;P_k\left( -\sech(x),-\dfrac{1}{2e^x} \sech(x)\right)  +k\;  P_{k-1}\left( -\sech(x),-\dfrac{1}{2e^x} \sech(x)\right)  \right),
\end{equation*}
\end{proof}
We can go even further and obtain a closed form for $\partial_x^m\left(\sech(x) \right)^\nu$, $\forall m\in\mathbb{N}, \forall \nu\in\mathbb{R}$. 

\begin{cor}
$\forall m\in\mathbb{N}, \forall \nu\in\mathbb{R}$

\begin{equation}\label{key}
\partial_x^m\left(\sech^\nu(x) \right)=\sech^\nu(x)\sum_{r=0}^m S_2(m,r)\;e^{xr}\sum_{s=0}^r \binom{r}{s}\dfrac{n!}{(n-r+s)!}e^{-x(r-s)}\;P_s^\nu\left( -\sech(x),-\dfrac{1}{2e^x} \sech(x)\right)  
\end{equation}	
where $P_m^k(.\;,.)$ are the polynomials in eq. \eqref{Pnnu}.
\end{cor}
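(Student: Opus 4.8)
The plan is to reduce the computation to the Stirling expansion of $(\xi\partial_\xi)^m$ combined with a Leibniz expansion, mirroring the proof of the preceding Proposition but now retaining \emph{all} Leibniz terms, since $\nu$ need no longer be an integer. First I would set $\xi=e^x$, so that $\partial_x=\xi\partial_\xi$ and hence $\partial_x^m=(\xi\partial_\xi)^m$; writing $\sech(x)=2\xi/(1+\xi^2)$ gives
\begin{equation*}
\sech^\nu(x)=2^\nu\,\xi^\nu\,(1+\xi^2)^{-\nu},
\end{equation*}
so the problem becomes the evaluation of $2^\nu(\xi\partial_\xi)^m\big[\xi^\nu(1+\xi^2)^{-\nu}\big]$.

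Next I would invoke the identity \eqref{PartSt}, $(\xi\partial_\xi)^m=\sum_{r=0}^m S_2(m,r)\,\xi^r\partial_\xi^r$, to convert the operator power into ordinary derivatives, leaving the inner object $\partial_\xi^r\big[\xi^\nu(1+\xi^2)^{-\nu}\big]$. On this I would apply the Leibniz rule, distributing the derivative over the two factors:
\begin{equation*}
\partial_\xi^r\big[\xi^\nu(1+\xi^2)^{-\nu}\big]=\sum_{s=0}^r\binom{r}{s}\big(\partial_\xi^{r-s}\xi^\nu\big)\big(\partial_\xi^{s}(1+\xi^2)^{-\nu}\big).
\end{equation*}
The first factor is the generalized power derivative $\partial_\xi^{r-s}\xi^\nu=\frac{\nu!}{(\nu-r+s)!}\,\xi^{\nu-r+s}$, with the factorials read as ratios of $\Gamma$ functions; this is exactly the coefficient $\frac{n!}{(n-r+s)!}$ of the statement under the evident identification $n\mapsto\nu$. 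The second factor is supplied verbatim by Corollary \ref{CorPnnu}, namely $\partial_\xi^{s}(1+\xi^2)^{-\nu}=(1+\xi^2)^{-\nu}\,P_s^\nu\!\left(-\frac{2\xi}{1+\xi^2},-\frac{1}{1+\xi^2}\right)$.

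Finally I would reassemble the sums and track the powers of $\xi$. Collecting $2^\nu\,\xi^r\,\xi^{\nu-r+s}\,(1+\xi^2)^{-\nu}=\big[2^\nu\xi^\nu(1+\xi^2)^{-\nu}\big]\,\xi^s=\sech^\nu(x)\,e^{xs}$ pulls the prefactor $\sech^\nu(x)$ out of the double sum, while the splitting $e^{xs}=e^{xr}\,e^{-x(r-s)}$ reproduces the two exponentials of the statement. Rewriting the arguments of $P_s^\nu$ through $-\frac{2\xi}{1+\xi^2}=-\sech(x)$ and $-\frac{1}{1+\xi^2}=-\frac{1}{2e^x}\sech(x)$ then gives the claimed formula. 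The only genuinely delicate point is the first Leibniz factor: for non-integral $\nu$ the repeated derivative of $\xi^\nu$ never truncates, so the entire sum $0\le s\le r$ survives — in sharp contrast with the Proposition's $\nu=1$ case, where $\partial_\xi^{r-s}\xi$ vanishes for $r-s\ge 2$ and only the two terms $s\in\{r-1,r\}$ remain, giving the compact $\xi P_k+kP_{k-1}$ structure. Handling this factor correctly via the Gamma-function form of the descending factorial is the heart of the argument; the remainder is bookkeeping of powers of $e^x$.
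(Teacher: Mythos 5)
Your argument is correct and is exactly the intended extension of the paper's proof of the preceding Proposition (the paper itself states this Corollary without proof): substitution $\xi=e^x$, the Stirling expansion \eqref{PartSt}, Leibniz on $\xi^\nu(1+\xi^2)^{-\nu}$ with the Gamma-function falling factorial, and Corollary \ref{CorPnnu} for $\partial_\xi^s(1+\xi^2)^{-\nu}$. Your reading of the stray $n$ as $\nu$ is the right one; it is confirmed by checking that the formula collapses to the Proposition's $e^{(k-1)x}(e^x P_k+kP_{k-1})$ structure when $\nu=1$, since then only the $s=r$ and $s=r-1$ terms survive.
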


We can move on and provide the higher order derivatives for other trigonometric circular functions.

\begin{exmp}
$\forall m\in\mathbb{N}$, $\forall x\in [0,2\pi]$

\begin{equation}\label{key}
\partial_x^m\left(\sec (x) \right)=i^m \sec(x)\sum_{r=0}^m S_2(m,r)\;e^{ix(r-1)} \left(e^{ix}P_r \left(  -\sec (x),-\dfrac{1}{2e^{ix}} \sec(x)\right)+r \;P_{r-1}\left(  -\sec(x),-\dfrac{1}{2e^{ix}} \sec(x)\right) \right) .
\end{equation}	
\end{exmp}

\begin{exmp}
	$\forall m\in\mathbb{N}$, $\forall x\in [0,2\pi]:x\neq \frac{\pi}{2}+k\pi,\;k\in\mathbb{Z}$
	
	\begin{equation}\label{key}
	\partial_x^m\left(\tan (x) \right)=\sum_{s=0}^m \binom{m}{s} \sin\left(x+(m-s)\dfrac{\pi}{2} \right)\partial_x ^s(\sec (x)). 
	\end{equation}	
\end{exmp}

\begin{exmp}
	$\forall m\in\mathbb{N}$, $\forall x\in [0,2\pi]:x\neq k\pi,\;k\in\mathbb{Z}$
	
	\begin{equation}\label{key}
	\partial_x^m\left(\cot (x) \right)=\sum_{s=0}^m \binom{m}{s} \cos\left(x+(m-s)\dfrac{\pi}{2} \right)\partial_x ^s\left( \sec \left( x-\dfrac{\pi}{2}\right) \right) . 
	\end{equation}	
\end{exmp}

\begin{rem}
The use of the Leibniz rule can be avoided by setting 

\begin{equation}\label{key}
\tan(x)=i\;\dfrac{1-\xi^2 }{1+\xi^2}=i\;\left( \dfrac{2}{1+\xi^2 }-1\right), \quad \xi=e^{ix} 
\end{equation}
and eventually ending up with 

\begin{equation}\label{key}
	\partial_x^m\left(\tan (x) \right)=i^{m+1} \left( \sec(x) \sum_{r=0}^m S_2(m,r)\;e^{ix(r-1)} P_r\left( -\sec(x),-\dfrac{1}{2e^{ix}}\sec(x)\right) -\delta_{m,0}\right) 
\end{equation}
indeed 

\begin{equation*}\label{key}
\begin{split}
& \partial_x^m\left(\tan (x) \right)=
i^m \left(\xi\partial_\xi \right)^m  \left(i\;\left( \dfrac{2}{1+\xi^2 }-1\right) \right) =
2\; i^{m+1}	\left(\xi\partial_\xi \right)^m    \left( \dfrac{1}{1+\xi^2 }-\dfrac{1}{2}\right)  =2\; i^{m+1} \sum_{r=0}^m S_2(m,r)\;\xi^r  \left( \dfrac{1}{1+\xi^2 }-\dfrac{1}{2}\right)^{(r)}=\\
& = 2\; i^{m+1} \sum_{r=0}^m S_2(m,r)\;\xi^r \left(  \left(  \dfrac{1}{1+\xi^2 }\right)^{(r)}+\left(-\dfrac{1}{2} \right)^{(r)}\right)   =  2\; i^{m+1}\left(\left(\sum_{r=0}^m S_2(m,r)\;\xi^r  \dfrac{1}{1+\xi^2}P_r(.,.)\right)+ \left(\sum_{r=0}^m S_2(m,r)\;\xi^r \left(-\dfrac{1}{2} \right)^{(r)}\right) \right)  =\\
& =i^{m+1} \left( \sec(x) \sum_{r=0}^m S_2(m,r)\;e^{ix(r-1)} P_r\left( -\sec(x),-\dfrac{1}{2e^{ix}}\sec(x)\right) -\delta_{m,0}\right) .
\end{split}
\end{equation*}

\end{rem}

In the forthcoming section we will discuss the comparison with previous papers and present possible developments along the lines we have indicated.

\section{Final Comments}

In the previous two sections we have dealt with a general procedure useful to derive closed formulae for the repeated derivatives of circular and hyperbolic functions and of their inverse. We have underscored the importance of the polynomials $P_n(x,y)$ which play a role analogous to the $DP$ introduced in refs. \cite{Hoffman,Hoffman2} and used in \cite{Cvijovic,Boyadzhiev} for analogous occurrences.\\

The $P_n(x,y)$ are two variable polynomials defined in terms of the Laplace transform of the Hermite Kamp\'e d\'e F\'eri\'et family \cite{Appel}. They have been loosely defined Legendre-like and can be reduced to more familiar forms, by noting that

\begin{equation}\label{key}
\begin{split}
& P_n(x,y)=y^{\frac{n}{2}}P_n\left( \dfrac{x}{\sqrt{-y}}\right) ,\\
& P_n(z)=n!\sum_{r=0}^{\lfloor\frac{n}{2}\rfloor}\dfrac{(-1)^r z^{n-2r}(n-r)!}{(n-2r)!r!}
\end{split}
\end{equation}
with $P_n(z)$ satisfying the generating function 

\begin{equation}\label{key}
\sum_{n=0}^\infty \dfrac{t^n}{n!} P_n(z)=\dfrac{1}{1-tz+t^2}, \qquad \forall t,z\in\mathbb{R}:\mid t^2-t\;z \mid <1
 \end{equation}
and

\begin{equation}\label{key}
\frac{1}{n!}P_n(2x,-1)=\sum_{k=0}^{\lfloor\frac{n}{2}\rfloor} \dfrac{(-1)^k(n-k)!(2x)^{n-2k}}{(n-2k)!k!}=U_n(x).
\end{equation}

The link with previous papers addressing the same problem treated here, in particular with that developed in refs. \cite{Cvijovic,Boyadzhiev} can be obtained by using the same steps suggested by Cvijovi\'{c} or Boyadzhiev. We assume that an identity of the type 

\begin{equation}\label{dtan}
\partial_x^n (\tan(x))=\Pi_n (\tan(x))
\end{equation}
with $\prod_n(.)$ being not yet specified polynomials\footnote{In refs. \cite{Cvijovic,Boyadzhiev} they are denoted by $P_n(x)$, we have used the capital Greek letter to avoid confusion with the $P_n(x,y)$ polynomials defined in this paper.}, does holds.\\

The polynomials $\Pi_n(.)$ can be determined by the change of variable $\tan(x)=\xi$ which allows to transform eq. \eqref{dtan} into

\begin{equation}\label{derPi}
\left[\left( 1+\xi^2\right) \partial_\xi\right]^n(\xi)=\Pi_n(\xi) 
\end{equation}
which is a kind of Rodrigues type relation \cite{Rainville} defining the polynomials $\Pi_n(\xi)$.\\

\noindent The relevant generating function can be obtained by multiplying both sides of eq. \eqref{derPi} by $\dfrac{t^n}{n!}$, by summing up over the index $n$ and ending up with

\begin{equation}\label{key}
e^{t\left[\left( 1+\xi^2\right) \partial_\xi\right]}\xi=\sum_{n=0}^\infty \dfrac{t^n}{n!} \Pi_n(\xi).
\end{equation}
The use of the Lie derivative identity \cite{DOVT} 

\begin{equation}\label{etf}
e^{t\left[\left( 1+\xi^2\right) \partial_\xi\right]}f(\xi)=f\left(\dfrac{\xi\cos(t)+\sin(t)}{\cos(t)-\xi\sin(t)} \right) 
\end{equation}
finally yields

\begin{equation}\label{key}
\sum_{n=0}^\infty \dfrac{t^n}{n!} \Pi_n(\xi)=\dfrac{\xi+\tan(t)}{1-\xi\tan(t)}
\end{equation}
which is the generating function given in \cite{Cvijovic,Boyadzhiev}.\\

The second family of $DP$ can be defined by the use of the same procedure. According to refs. \cite{Cvijovic,Boyadzhiev} they are implicitly defined by the condition

\begin{equation}\label{key}
\partial_x^n (\sec(x))=\sec(x)Q_n(\tan(x))
\end{equation}
which, by the use of the same change of variable leading to eq. \eqref{derPi}, yields

\begin{equation}\label{key}
Q_n(\xi)=\dfrac{1}{\sqrt{1+\xi^2}}\left( (1+\xi^2)\;\partial_\xi\right)^n \sqrt{1+\xi^2}
\end{equation}
which can be straightforwardly exploited to derive the relevant properties. The use of the identity \eqref{etf} eventually yields the associated generating functions 

\begin{equation}\label{key}
\sum_{n=0}^\infty \dfrac{t^n}{n!}Q_n (\xi)=\dfrac{\sec(t)}{1-\xi\tan(t)}
\end{equation}
which is the same reported in refs. \cite{Cvijovic,Boyadzhiev}.\\


\noindent By recalling that the Hoppe formula writes 

\begin{equation}\label{key}
\begin{split}
& \partial_t^m f(g(t)) =\sum_{k=0}^m \dfrac{1}{k!} f^{(k)}(\sigma)\mid_{\sigma=g(t)}A_{m,k},\\
& A_{m,k}=\sum_{j=0}^k \binom{k}{j}(-1)^{k-j}g(t)^{k-j}\partial_t^m(g(t)^j), \quad \forall m\in\mathbb{N},
\end{split}
\end{equation}
 for the case of $\sec(x)$, we find 


\begin{equation}\label{key}
\partial_x^m \sec(x)=\sum_{k=0}^m  \sec^{k+1}(x)\sum_{j=0}^k \binom{k}{j}(-1)^{j}\cos^{k-j}(x)\;\partial_x^m(\cos^j(x))
\end{equation}

which appears impractical since it needs the repeated derivatives of an integer power of the cosine function.\\

Before closing the paper we consider worth to underscore the possible use of $DP$ to evaluate the derivatives of the type

\begin{equation}\label{key}
\lambda_{m,j}(x)=\partial_x^m (\cos^j(x)).
\end{equation}
Making the ansatz that 

\begin{equation}\label{key}
\partial_x^m (\cos^j(x))=(-1)^j\Lambda_{m,j}(\tan(x))
\end{equation}
and, by using the same procedure leading to eq. \eqref{derPi}, we can identify the function

\begin{equation}\label{key}
\Lambda_{m,j}(\xi)=\left[\left( 1+\xi^2\right)\partial_\xi  \right]^m \left( \dfrac{1}{\left( \sqrt{ 1+\xi^2}\right) ^{\;j} }\right)  
\end{equation}
which is specified by the generating function 

\begin{equation}\label{key}
\sum_{m=0}^\infty \dfrac{t^m}{m!}\Lambda_{m,j}(\xi)=
\dfrac{\left[ 1-\xi \tan(t)\right] ^j}{\left[\left(1+\xi^2 \right)\left(1+\tan^2(t) \right)   \right]^{\frac{j}{2}} }
\end{equation}

It is worth to consider the alternative assumption

\begin{equation}\label{key}
\partial_x^m (\cos^j(x)) =(-1)^m\Delta_{m,j}(\cos(x))
\end{equation}
which, after the change of variable $\xi=\cos(x)$, yields for the function $\Delta_{m,j}(.)$ the operational definition 
\begin{equation}\label{key}
\Delta_{m,j}(\xi)=\left(-\sqrt{1-\xi^2}\;\partial_\xi \right)^m \xi^j
\end{equation}
The use of the Lie derivative identity \cite{DOVT}

\begin{equation}\label{key}
e^{-t\;\left(\sqrt{1-\xi^2}\;\partial_\xi \right)}f(\xi)=f\left(\xi\cos(t)-\sqrt{1-\xi^2}\sin(t) \right) 
\end{equation}
yields, for $\Delta_{m,j}(\xi)$, the generating function

\begin{equation}\label{key}
\sum_{m=0}^\infty \dfrac{t^m}{m!}\Delta_{m,j}(\xi)=\left(\xi-\sqrt{1-\xi^2}\;\tan(t) \right)^j \cos^j(t). 
\end{equation} 

The two methods we have discussed in this paper, namely the procedure based on the $DP$ of refs. \cite{Cvijovic,Boyadzhiev} or on the support polynomials $P_n(x,y)$, are complementary. There are no prevailing reasons to prefer one or the other method. The formulae associated with the first are more synthetic but $\Pi_n(\xi),\;Q_n(\xi)$ are not given explicitly and should be evaluated recursively (which becomes cumbersome for larger order of the derivative). On the other side, the use of the other procedure leads to less appealing formulae in terms of polynomials which are, however, explicitly given.\\

In a forthcoming investigation we will discuss further extension of the method and we will go deeper into the links with previous researches.\\

\textbf{Acknowledgements}\\

\noindent The work of Dr. S. Licciardi was supported by an Enea Research Center individual fellowship.\\
Prof. R.M. Pidatella wants to thank the fund of University of Catania "Metodi gruppali e umbrali per modelli di diffusione e trasporto" included in "Piano della Ricerca 2016/2018 Linea di intervento 2'' for partial support of this work.\\

\textbf{Author Contributions}\\

\noindent Conceptualization: G.D.; methodology: G.D., S.L.; data curation: G:D., S.L.; validation: G.D., S.L., R.M.P.; formal analysis: G.D., S.L., R.M.P.; writing - original draft preparation: G.D., E.S.; writing - review and editing: S.L..\\

\textbf{References}

{}


\begin{thebibliography}{}

\bibitem{Adamchik} V.S. Adamchik; \textit{``On the Hurwitz function for rational arguments"}, Appl. Math. Comp., 187, pp. 3--12, 2007.
 
\bibitem{Neto} A.F. Neto; \textit{``Higher Order Derivatives of Trigonometric Functions, Stirling Numbers of the Second Kind and Zeon Algebra"}, Article 14.9.3, Journal of Integer Sequences, Vol. 17 2014.

\bibitem{Feinsilver} P. Feinsilver; \textit{``Zeon algebra, Fock space, and Markov chains"}, Communications on Stochastic Analysis, Vol. 2, nr. 2, art. 6, 2008.

\bibitem{Cvijovic} D. Cvijovi\'{c}; \textit{``Derivative Polynomials and Closed-Form Higher Derivative Formulae"}, Applied Mathematics and Computation 215(8), pp. 3002--3006, 2009.
 
 \bibitem{Boyadzhiev} K.N. Boyadzhiev; ``Derivative polynomials for tanh, tan, sech and sec in explicit form", Fibonacci Quarterly, 45.4, pp. 291--303, 2007.
 
\bibitem{Hoffman} M.E. Hoffman; \textit{``Derivative polynomials for tangent and secant"}, Amer. Math. Monthly, 102, pp. 23--30, 1995.

\bibitem{Hoffman2} M.E. Hoffman; `\textit{`Derivative polynomials, Euler polynomials and associated integer sequences"}, Electron. J. Combin., 6, R21, 1999.

\bibitem{Johnson} W.P. Johnson; \textit{``The Curious History of Faa di Bruno's Formula"}, Amer. Math. Monthly, 109, pp. 217--234, 2002.

\bibitem{Comtet} L. Comtet; \textit{``Advanced Combinatorics: The art of finite and infinite expansions"}, Springer: Berlin/Heidelberg, Germany,, 1974

\bibitem{S.Roman} S. Roman; \textit{"The Umbral Calculus"}; Dover Publications, New York, 2005. 

\bibitem{SLicciardi} S. Licciardi; PhD Thesis: \textit{``Umbral Calculus, a Different Mathematical Language”},  Mathematics and Computer Sciences, Dep. of Mathematics and Computer Sciences, XXIX cycle, University of Catania, 2018,  arXiv:1803.03108 [math.CA].

\bibitem{DTouchard} G. Dattoli, B. Germano, M.R.Martinelli, P.E. Ricci; \textit{``Touchard like polynomials and generalized Stirling numbers"}, Applied Mathematics and Computation, vol. 218, Issue 12, pp. 6661--6665, 2012.

\bibitem{Knuth} R.L. Graham, D.E. Knuth, O. Patashnik; \textit{``Concrete Mathematics: A Foundation for Computer Science"}, 2nd ed.; Addison-Wesley Longman Publishing Co., Inc.: Boston, MA, USA, 1994.  	

\bibitem{Mansour} T. Mansour, M. Schork; \textit{``Commutation Relations, Normal Ordering, and Stirling Numbers"}, Chapman and Hall/CRC, pages 504,2015,  ISBN 9781466579880.

\bibitem{Germano} G. Dattoli; \textit{"Generalized polynomials, operational identities and their applications"}; Journal of Computational and Applied mathematics, Elsevier, vol 118, Issues 12, pp. 111--123, 2000.

\bibitem{Riordan} J. Riordan; \textit{``An Introduction to Combinatorial Analysis"}, New York: Wiley, pp. 35-38, 49 and 142, 1980.  

\bibitem{Bell} E.T. Bell; \textit{``Exponential Polynomials"}, Ann. Math. 35, pp. 258--277, 1934. 

\bibitem{DSS} G. Dattoli, H.M. Srivastava, D. Sacchetti; \textit{``Mixed and higher-order generating functions: An extension to new families"}, Internat. J. Math. Statist. Sci. 8, 79, 87, 1999.

\bibitem{Bab} D. Babusci, G. Dattoli, K. Gorska, K. Penson; \textit{``Repeated derivatives of composite functions and generalizations of the Leibniz rule"}, Applied Mathematics and Computation, 241, 193, 2014.
 
\bibitem{Nasa} E.G. Wintucky; \textit{``Formulas for nth Order Derivatives of Hyperbolic and Trigonometric Functions"}, NASA Lewis Research Center, Cleveland, OH, United States, doc. id. 19710020296 , 1971 https://ntrs.nasa.gov/search.jsp?R=19710020296. 

\bibitem{L.C.Andrews} L.C. Andrews; \textit{``Special Functions For Engeneers and Applied mathematicians"}; Mc Millan New York 1985.

\bibitem{DGM} G. Dattoli, B. Germano, M.R.Martinelli, P.E. Ricci; \textit{``“A novel theory of Legendre polynomials"}, Mathematical and Computer Modelling, vol. 54, no. 1--2, pp. 80--87, 2011.

\bibitem{DGM2} G. Dattoli, B. Germano, M.R.Martinelli, P.E. Ricci; \textit{``Sheffer and Non-Sheffer Polynomial Families"}, International Journal of Mathematics and Mathematical Sciences, Article ID 323725, 2012.

\bibitem{Appel} P. App\'el, J. Kamp\'e de F\'eri\'et; \textit{``Fonctions Hypergeometriques and Hyperspheriques. Polynomes d'Hermite"}; Gauthiers-Villars, Paris, 1926.
 
\bibitem{Rainville} E.D. Rainville; \textit{``Special Functions"}, Macmillan, New York, 1960. 
 
\bibitem{DOVT} G. Dattoli, P.L. Ottaviani, A. Torre, L. Vazquez; \textit{``Evolution operator
equations-integration with algebraic and
 finite-difference methods-applications 
 to physical problems in classical and quantum
  mechanics and quantum field theory"}, Rivista del Nuovo Cimento 20, 1, 1997.
 
\end{thebibliography}
\end{document}